\def\bt{\begin{thm}}
\def\et{\end{thm}}
\def\bl{\begin{lem}}
\def\el{\end{lem}}
\def\bd{\begin{defn}}
\def\ed{\end{defn}}
\def\bc{\begin{cor}}
\def\ec{\end{cor}}
\def\bp{\begin{proof}}
\def\ep{\end{proof}}
\def\br{\begin{rem}}
\def\er{\end{rem}}
\newtheorem{thm}{Theorem}[section]
\newtheorem{prop}[thm]{Proposition}
\newtheorem{lem}[thm]{Lemma}
\newtheorem{defn}[thm]{Definition}
\newtheorem{rem}[thm]{Remark}
\newtheorem{cor}[thm]{Corollary}
\numberwithin{equation}{section}
\newcommand{\bthm}{\begin{thm}}
\newcommand{\ethm}{\end{thm}}
\newcommand{\bstp}{\begin{stp}}
\newcommand{\estp}{\end{stp}}
\newcommand{\blemma}{\begin{lemma}}
\newcommand{\elemma}{\end{lemma}}
\newcommand{\bprop}{\begin{prop}}
\newcommand{\eprop}{\end{prop}}
\newcommand{\bpf}{\begin{pf}}
\newcommand{\epf}{\end{pf}}
\newcommand{\bdefn}{\begin{defn}}
\newcommand{\edefn}{\end{defn}}
\newcommand{\brk}{\begin{rmrk}}
\newcommand{\erk}{\end{rmrk}}
\newcommand{\bcrl}{\begin{crl}}
\newcommand{\ecrl}{\end{crl}}
\title[]{Equidistribution of zeros of Random polynomials and Random Polynomial mappings on $\mathbb{C}^{m}$}
\address{}
\email{ozangunyuz@alumni.sabanciuniv.edu}
\date{}
\keywords{Random polynomials, equidistribution of zeros, variance, Chebyshev Polynomials}
\subjclass[2020]{Primary: 32A60, 60D05 Secondary: 32U40}
\begin{document}
\author{Ozan Günyüz}

\address{Faculty of Engineering and Natural Sciences, Sabanc{\i} University, \.{I}stanbul, Turkey}

\begin{abstract}
We study equidistribution problem of zeros in relation to a sequence of $Z$-asymptotically Chebyshev polynomials on $\mathbb{C}^{m}$. We use certain results obtained in a very recent work of Bayraktar, Bloom and Levenberg and have an equidistribution result in a more general probabilistic setting than what the paper of Bayraktar, Bloom and Levenberg considers even though the basis polynomials they use are more general than $Z$-asymptotically Chebyshev polynomials. Our equidistribution result is based on the expected distribution and the variance estimate of random zero currents corresponding to the zero sets (zero divisors) of polynomials. This equidistribution result of general nature shows that equidistribution result turns out to be true without the random coefficients being i.i.d. (independent and identically distributed), which also means that there is no need to use any specific probability distribution function for these random coefficients. In the last section, unlike from the $1$-codimensional case, we study the basis of polynomials orthogonal with respect to the $L^{2}$-inner product defined by the weighted asymptotically Bernstein-Markov measures on a given locally regular compact set, and with a probability distribution studied well by Bayraktar including the (standard) Gaussian and the Fubini-Study probability distributions as special cases, we have an equidistribution result for codimensions bigger than $1$.
  \end{abstract}
\maketitle


\section{Introduction and Background}\label{S2}

The statistical issues of zero sets of random functions of several variables, such as random polynomials of multivariable real and complex variables, have piqued the interest of many researchers. It is impossible to give all the references here since there is an extensive literature in this subject. For this reason we will be a little short for explaining what has been done so far. Many results regarding both Gaussian and non-Gaussian cases and historical advancements of this polynomial theory may be found, for example, in \cite{ Bay17b, BL15, BloomS, BL05, BloomD, ROJ, SHSM, HN08} (and references therein). For example, in \cite{BL15}, the authors work with the complex
random variables that have bounded distribution functions on the whole complex plane $\mathbb{C}$ and outside
of a very large disk with radius $\rho$, its integral with respect to the two dimensional Lebesgue
measure has an upper bound depending on $\rho$, the latter condition is called the tail-end estimate. Long before these advances, as is commonly known, the works of Polya and Bloch, Littlewood-Offord, Kac, Hammersley, and Erdös-Turan were the first efforts on the distribution of roots of random algebraic equations in single real variable, and the interested reader can go to the articles \cite{BlP, Kac43, LO43, HAM56, ET50}.

As another interesting direction, there is an expanding physics literature dealing with the equidistribution and probabilistic problems concerned with the zeros of complex random polynomials. See, for example, \cite{FH, Hann, NV98} for foundational research in this area.

As the most general setting so far, the equidistribution, expected distribution and variance of zero currents of integration of random holomorphic sections with different probabilistic settings (including Gaussian and non-Gaussian types) are studied in \cite{BCM, Bay16, Shif, SZ99}. The initial and pioneering work (\cite{SZ99}) in this setting belongs to Shiffman and Zelditch. In this paper, we will prove an equidistribution result and the techniques are based on the papers of \cite{SZ99} and \cite{Shif}. The main tools are expected distribution and variance estimation of currents of integration related to the zero sets of polynomials. In \cite{Bay16}, \cite{Shif}, \cite{SZ08}, \cite{SZ10}, \cite{Gun}, \cite{BG}, the reader can find similar estimations in different probabilistic setups in the general setting of holomorphic line bundles over Kahler manifolds.

The \emph{pluricomplex Green function} of a non-pluripolar compact set $K\subset \mathbb{C}^{m}$ is defined as follows
\begin{equation*}
V_{K}(z):=\sup \{u(z):u|_{K}\leq 0,\ u\in
\mathcal{L}(\mathbb{C}^{m})\},
\end{equation*}%
where \thinspace $\mathcal{L}(\mathbb{C}^{m})$ represents the Lelong class consisting of
all functions\thinspace\ $u$ plurisubharmonic on $\mathbb{C}^{m}$ such that $u(\zeta
)-\ln |\zeta |$ is bounded from above near infinity. The upper semicontinuous regularization of $V_{K}(z)$ is the following $$V^{*}_{K}(z):=\limsup_{\zeta \rightarrow z}{V_{K}(\zeta)}.$$ As is well-known, $V^{*}_{K}(z)\in \mathcal{L}(\mathbb{C}^{m})$
(precisely if $K$ is non-pluripolar, see Corollary 5.2.2 of \cite{Kl}). For more detail about the pluricomplex Green function, \cite{Kl} may be useful.

A compact set $K$ in $\mathbb{C}^{m}$ is \textit{regular} if $V_{K}\equiv 0$ on $K$ (and therefore $V_{K}$ is continuous on $\mathbb{C}^{m}$). The compact sets we consider in this paper will be assumed to be regular.

We use the notation $ ||f|| _{D}:=\sup\left\{ \left\vert f\left( z\right) \right\vert :z\in D\right\} $ for a
function $f:D\rightarrow \mathbb{C}$. Let \thinspace $\mathbb{N}^{m}$\thinspace\ be the collection of
all $m$-dimensional vectors with non-negative integer coordinates. For
\thinspace $k=\left( k_{1},\ldots ,k_{\nu },\ldots ,k_{m}\right) \in \mathbb{
N}^{m}$ and $z=(z_{1},\ldots ,z_{m})\in \mathbb{C}^{m}$, let $%
z^{k(j)}=z_{1}^{k_{1}(j)}\ldots z_{n}^{k_{m}(j)}, \,j\in \mathbb{N}$ and ${\lvert k(j)\rvert }%
:=k_{1}(j)+\ldots +k_{m}(j)$ be the degree of the monomial $e_{j}(z)=z^{k(j)}$. We consider $\
$ the enumeration $\left\{ k\left( j\right) \right\} _{j\in \mathbb{N}}$ of
the set $\mathbb{N}^{m}$ such that ${\lvert k(j)\rvert }\leq {%
\lvert k(j+1)\rvert }$ and on each set $\left\{ \left\vert k\left( j\right)
\right\vert =n\right\} $ the enumeration coincides with the lexicographic
order. We will write $s(j):={\lvert
k(j)\rvert }$. The number of multiindices of degree at most $n$ is
\thinspace $d_{n}:=C_{m+n}^{n}=\dim{(\mathcal{P}_{n})}$, where $\mathcal{P}_{n}$ is the vector space of holomorphic polynomials on $\mathbb{C}^{m}$ of degree at most $n$. \thinspace

The standard $\left( m-1\right) $-simplex will be taken into consideration
\begin{equation}
\Delta :=\left\{ \theta =\left( \theta _{\nu }\right) \in \mathbb{R}%
^{m}:\theta _{\nu }\geq 0,\ \nu =1,\ldots ,m;\ \sum_{\nu =1}^{m}\theta _{\nu
}=1\right\},  \label{sgm}
\end{equation}%
and its interior (with respect to the relative topology on the hyperplane containing $%
\Delta $)
\begin{equation*}
\Delta ^{\circ }:=\left\{ \theta =\left( \theta _{\nu }\right) \in \Delta
:\theta _{\nu }>0,\ \nu =1,\ldots ,m\right\} .
\end{equation*}%
For $\theta \in \Delta$ we denote by $\mathcal{C}_{\theta }$ the set of all
infinite sequences $N\subset \mathbb{N}$ such that $\frac{k\left( j\right) }{%
s\left( j\right) }\overset{N}{\rightarrow }\theta $.

Leja raised the problem as to whether there is usual limit for transfinite diameter in several complex variables (\cite{L}). Zakharyuta in his seminal work \cite{Za1} solved this problem affirmatively for an arbitrary compact set $K\subset \mathbb{C}^n$ by introducing the following what is called \emph{directional Chebyshev constants} \begin{eqnarray}
\tau \left( K,\theta \right) &:&=\limsup\limits_{\substack{ j\rightarrow
\infty  \\ \frac{k\left( j\right) }{\left\vert k\left( j\right) \right\vert }%
\rightarrow \theta }}\tau _{j}:=\sup_{L\in \mathcal{L}_{\theta
}}\limsup\limits_{j\in Y}\tau _{j},\ \theta \in \Delta ,\ \   \label{tkt} \\
\tau _{j} &=&\tau _{j}\left( K\right) :=\left( M_{j}\right) ^{1/s\left(
j\right) },\ j\in \mathbb{N},
\end{eqnarray}%
where
\begin{equation}
M_{j}:=\inf \left\{ \left\vert p\right\vert
_{K}:p=e_{j}+\sum_{l=1}^{j-1}c_{l}\ e_{l}\ \right\} ,\ j\in \mathbb{N}
\label{mii}
\end{equation}The constants $M_{j}$ are known as the \emph{least uniform deviation of monic polynomials} from the identical zero on compact set $K$. A polynomial which attains its infimum in (\ref{mii}) is called a \emph{Chebyshev polynomial}. In the context of the theory of best approximation in Banach spaces (\cite{Ah}, section 8), this kind of polynomials always exists, but the uniqueness is not ensured.

Let $P(k(j)):=\{t(z)=e_{j}(z)+\sum_{l<j}{c_l \,e_{l}(z)}: c_{l}\in \mathbb{C}\}$. Next definition is due to Bloom (\cite{Bl01}) (see also \cite{BBL}).

\begin{defn} \label{cheb1}
Let $K \subset \mathbb{C}^{m}$ be compact and $\theta\in \Delta^{\circ}$ be given. A sequence of polynomials $\{t_{j}\}_{j \in N}$, where $N \subset \mathbb{N}$ is said to be \textit{$\theta$-asymptotically Chebyshev} if \begin{itemize}

  \item $s(j)=|k(j)|\rightarrow \infty$ and $N\in \mathcal{C}_{\theta}$,
  \item $||t_{j}||_{K}^{1/|k(j)|} \rightarrow \tau(K, \theta)$  when $j\rightarrow \infty$.

\end{itemize}
Following \cite{BBL}, a sequence $\{t_{j}\}_{j \in \mathbb{N}}$ is called \textit{asymptotically Chebyshev for $K$} if for any $\theta \in \Delta^{\circ}$, there is a subsequence $N\subset \mathbb{N}$ that satisfies the above three conditions. If the sequence has also the condition that for each $\theta \in \Delta^{\circ}$ and for every sequence of $\beta \in \mathbb{N}^{m}$ with $\lim_{|\beta|\rightarrow \infty}{\frac{\beta}{|\beta|}}=\theta$, one has $\lim{\|t_{j}\|_{K}^{\frac{1}{|\beta(j)|}}}=\tau(K, \theta)$, then we say that $\{t_{j}\}$ is a \textit{Z-asymptotically Chebyshev sequence}.  \end{defn}

As observed in \cite{Bl01} and \cite{BBL}, for every regular compact set, one can find a sequence of $Z$-asymptotically Chebyshev polynomials. The concept of a sequence of $Z$-asymptotically Chebyshev polynomials is a generalization of many other important polynomial types studied in the literature such as Fekete polynomials associated with an array of Fekete points in a compact set $K$, Leja polynomials associated with a sequence of so-called Leja points in a compact set $K$ and $L^{2}(\mu)$-minimal polynomials for a compact set $K$, where $\mu$ is a Bernstein-Markov measure. For some other nice examples, see \cite{BBL}. We will be working with $Z$-asymptotically Chebyshev polynomials, and as also mentioned and investigated in \cite{BBL}, our bases in this paper do not have to be orthonormal either.

Let $\{t_{j}\}$ be a $Z$-asymptotically Chebyshev sequence for $K$. We will make the assumption that there is one $t_{j}$ for every $k \in \mathbb{N}^{m}$ in order to get a basis of polynomials (see also Remark 3.7 of \cite{BBL}). Write \begin{equation}\label{unitch}u_{j}(z):=\frac{t_{j}}{||t_{j}(z)||_{K}}.\end{equation} We always assume that $u_{1}(z)=t_{1}(z)\equiv 1$. In \cite{BBL}, the authors study the following Chebyshev-Bergman functions

\begin{equation}\label{Chbe}
  \Gamma_{n}(z):=\sum_{j=1}^{d_{n}}{|u_{j}(z)|^{2}}.
\end{equation}

 In Proposition 2.3 of \cite{BBL}, by using a Zakharyuta-Siciak type theorem of Bloom (\cite{Bl01}, Theorem 4.2) for $Z$-asymptotically Chebyshev sequences for compact sets in $\mathbb{C}^{m}$ and a diagonalization argument, the authors prove for the sequence (\ref{Chbe}) that when a subsequence $L$ of $\mathbb{N}$ is given, one can find another subsequence $L' \subset L$ and a countable dense subset of points $\{z_{k}\}\,(k=1, 2, \ldots) $\,in\, $\mathbb{C}^{m}$ such that the following holds $$\lim_{n\rightarrow \infty,\,n\in L'}{\frac{1}{2n}\log{\Gamma_{n}(z_{k})}}=V_{K}(z_{k}), \,k=1, 2, \ldots$$

Theorem 4.2 in \cite{Bl01} also yields that the sequence $\{\frac{1}{2n}\log{\Gamma_{n}}\}$ is locally uniformly bounded from above on $\mathbb{C}^{m}$.

As a consequence of Proposition 2.3 in \cite{BBL}, the following lemma is proved, which will be crucial for the purposes of this paper.

\begin{lem}[Corollary 2.6, \cite{BBL}]\label{loc1}
Given a compact set $K \subset \mathbb{C}^{m}$, for a sequence of $Z$-asymptotically Chebyshev polynomials for $K$, one has $$\frac{1}{2n}\log{\Gamma_{n}}\rightarrow V_{K}$$in \,$L^{1}_{loc}(\mathbb{C}^{m})$.
\end{lem}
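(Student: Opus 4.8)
The plan is to upgrade the pointwise convergence on a countable dense set, supplied by Proposition 2.3 of \cite{BBL}, to $L^1_{loc}$ convergence of the whole sequence $\{\tfrac{1}{2n}\log\Gamma_n\}$. First I would record the two structural facts already in hand: the functions $\varphi_n:=\tfrac{1}{2n}\log\Gamma_n$ are plurisubharmonic on $\mathbb{C}^m$ (each $\Gamma_n$ is a finite sum of squares of moduli of holomorphic polynomials, hence $\log\Gamma_n\in\mathrm{PSH}$), and by Theorem 4.2 of \cite{Bl01} the family $\{\varphi_n\}$ is locally uniformly bounded above on $\mathbb{C}^m$. A standard normal-families/compactness principle for psh functions then says: a sequence of psh functions that is locally uniformly bounded above is either locally uniformly divergent to $-\infty$ or has the property that every subsequence has a further subsequence converging in $L^1_{loc}$ to some psh limit (this is the Brelot–Cartan theorem, or Hörmander, \emph{Notions of Convexity}, Thm.~3.2.12 / Hörmander \emph{Complex Analysis in Several Variables} Thm.~4.1.9).

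Next I would use a subsequence argument to pin down the limit. Fix an arbitrary subsequence $L\subset\mathbb{N}$. Apply the compactness principle to extract $L''\subset L$ along which $\varphi_n$ converges in $L^1_{loc}$ to some psh function $\psi$; since $V_K$ is finite (indeed continuous, as $K$ is regular) and the $\varphi_n$ are uniformly bounded above, the divergent alternative is excluded. Now apply Proposition 2.3 of \cite{BBL} to the subsequence $L''$: it yields a further subsequence $L'\subset L''$ and a countable dense set $\{z_k\}\subset\mathbb{C}^m$ with $\varphi_n(z_k)\to V_K(z_k)$ for every $k$ along $L'$. On the other hand, $L^1_{loc}$ convergence of psh functions forces convergence of the upper-semicontinuous regularizations of the limit at every point, and in fact $(\limsup_{n\in L'}\varphi_n)^*=\psi$ a.e.; combined with the fact that for psh functions $L^1_{loc}$-limits can be identified via their values on any dense set where convergence is known (using the sub-mean-value inequality to get $\psi(z_k)\le \liminf \varphi_n(z_k)$ and the Hartogs-type upper bound $\limsup\varphi_n(z_k)\le\psi(z_k)$ off a pluripolar set), one concludes $\psi=V_K$ a.e. on the dense set, hence $\psi=V_K$ everywhere (both sides psh, agreeing on a dense set — more carefully, $\psi=V_K^*=V_K$ since $V_K$ is continuous).

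Having shown that every subsequence $L$ admits a further subsequence along which $\varphi_n\to V_K$ in $L^1_{loc}$, the standard subsequence principle gives that the full sequence converges: $\varphi_n\to V_K$ in $L^1_{loc}(\mathbb{C}^m)$, which is the assertion of the lemma.

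I expect the main obstacle to be the clean identification of the $L^1_{loc}$-limit $\psi$ with $V_K$ from knowing only pointwise convergence $\varphi_n(z_k)\to V_K(z_k)$ on a \emph{countable} dense set — one must be careful that $L^1_{loc}$-convergence of psh functions controls pointwise values only up to a pluripolar (measure-zero) exceptional set, so the argument has to route through the semicontinuous regularization and the Hartogs lemma (a subsequence of psh functions bounded above which is pointwise $\le$ a constant on a set behaves well off a pluripolar set), together with the regularity hypothesis $V_K=V_K^*$ that makes the comparison on the dense set sufficient. All the other ingredients — plurisubharmonicity of $\varphi_n$, the uniform upper bound, and the compactness principle — are off-the-shelf.
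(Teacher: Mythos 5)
Your overall architecture is the right one and is essentially how \cite{BBL} deduces its Corollary 2.6 from its Proposition 2.3: the functions $\varphi_n=\tfrac{1}{2n}\log\Gamma_n$ are plurisubharmonic and locally uniformly bounded above, so the compactness principle for psh sequences applies, and one identifies the limit of an arbitrary $L^1_{loc}$-convergent sub-subsequence via the pointwise convergence on the countable dense set. One minor slip first: local uniform boundedness \emph{above} does not by itself exclude the alternative $\varphi_n\to-\infty$ locally uniformly. What excludes it is the normalization $u_1\equiv 1$, which gives $\Gamma_n\ge 1$ and hence $\varphi_n\ge 0$ (alternatively, apply Proposition 2.3 first and use $\varphi_n(z_1)\to V_K(z_1)>-\infty$).

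The genuine gap is in the identification $\psi=V_K$. From $\varphi_n(z_k)\to V_K(z_k)$ along $L'$ and the standard pointwise inequality $\limsup_{n}\varphi_n(z)\le\psi(z)$ (valid at \emph{every} point, by the sub-mean value property combined with $L^1_{loc}$ convergence of the ball averages) you obtain $\psi\ge V_K$ on the dense set, hence everywhere by upper semicontinuity of $\psi$ and continuity of $V_K$. But the reverse inequality $\psi\le V_K$ does not follow from the dense-set data by the mechanisms you cite. The bound $\psi(z_k)\le\liminf_n\varphi_n(z_k)$ is false in general (take $\varphi_n=\tfrac1n\log|z|\to 0$ in $L^1_{loc}$, whose $\liminf$ at the origin is $-\infty$); the quasi-everywhere equality $\psi=\limsup_n\varphi_n$ off a pluripolar set is of no use here, because a countable dense set is itself pluripolar and may lie entirely inside the exceptional set; and two psh functions agreeing on a dense set need not coincide (compare $0$ with $\max(w+C,0)$, where $w$ is psh with logarithmic poles on a countable dense set). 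The missing ingredient is the global Bernstein--Walsh bound: since $\deg u_j\le n$ and $\|u_j\|_K=1$, one has $|u_j|\le e^{nV_K}$ on $\mathbb{C}^m$, hence $\varphi_n\le V_K+\tfrac{\log d_n}{2n}$ and $\limsup_n\varphi_n\le V_K$ \emph{everywhere} --- this is relation (2.5) of \cite{BBL}, which the present paper quotes just before (\ref{jen}). With it, $\psi=(\limsup_{n\in L'}\varphi_n)^*\le V_K^*=V_K$, and the two inequalities close the identification; the subsequence principle then finishes the proof as you describe.
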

$\mathcal{D}^{p, q}(\mathbb{C}^{m})$ denotes the space of test forms of bidegree $(p, q)$ on $\mathbb{C}^{m}$.

Our probabilistic setup necessary in the sequel will be as follows. Following the paper \cite{BCM}, we describe how we randomize the space $\mathcal{P}_{n}$.  Let $K\subset \mathbb{C}^{m}$ be compact. Let $\{u_{j}\}_{j=1}^{d_{n}}$ be a basis for $\mathcal{P}_{n}$ such that the polynomials $u_{j}$ are as in (\ref{unitch}). Then, for any polynomial $F \in \mathcal{P}_{n}$ of degree $n$, we have \begin{equation}\label{repch}F(z)=\sum_{l=1}^{d_{n}}{a^{(n)}_{l}\,u_{nl}(z)}:=\langle a^{(n)}, u^{(n)}(z) \rangle \in \mathcal{P}_{n},\end{equation} where $a^{(n)}=(a^{(n)}_{1}, \ldots, a^{(n)}_{d_{n}})\in \mathbb{C}^{d_{n}}$ and $u^{(n)}(z)=(u_{n1}(z), \ldots, u_{nd_{n}}(z)) \in \mathcal{P}_{n}^{d_{n}}$. We identify the space $\mathcal{P}_{n}$ with $\mathbb{C}^{d_{n}}$ and furnish it with a probability measure $\mu_{n}$ satisfying the moment condition below:

We now give a general moment condition: There exist a constant $\alpha\geq 2$ and for every $n\geq 1$ constants $C_{n}=o(n^{\alpha})>0$ such that
\begin{equation} \label{moment} \int_{\mathbb{C}^{d_{n}}}{\bigl\lvert\log{|\langle a, v \rangle|\bigr\rvert^{\alpha}}d\mu_{n}(a)}\leq C_{n}\end{equation}for every $v\in \mathbb{C}^{d_{n}}$ \, with\, $\| v \|=1$. Hence $(\mathcal{P}_{n}, \mu_{n})$ is the probability space consisting of the random polynomials. We also consider the infinite product probability measure $\mu_{\infty}$ induced by $\mu_{n}$, $\mu_{\infty}= \prod_{n=1}^{\infty}{\mu_{n}}$ on the product space $\prod_{n=1}^{\infty}{\mathcal{P}_{n}}$: \begin{equation*}
                                                      (\mathcal{P}_{\infty}, \mathbf{\mu}_{\infty})=(\prod_{n=1}^{\infty}{\mathcal{P}_{n}}, \prod_{n=1}^{\infty}{\mu_{n}}).
                                                    \end{equation*} These probability spaces varying with the degree $n$ depend on the choice of basis, however the equidistribution of zeros of polynomials will be independent of the basis chosen, as Theorem \ref{equidc1} corroborates.

For the practical use of our results, we note that for any regular compact set $K\subset \mathbb{C}^{m}$ there exists a sequence of $Z$-asymptotically Chebyshev polynomials.

Many other widely used probability measures verify this moment condition (\ref{moment}), such as Gaussian, Fubini-Study, locally moderate probability measures etc. Also, we wish to underline that this moment condition provides us with a fairly general probabilistic setting involving no  i.i.d. (independent and identically distributed) coefficients for codimension $1$ and there will be no specific probability distribution function either. In the case of Gaussian and Fubini-Study probability measures, the constant $C_{n}$ above in the moment condition (\ref{moment}) becomes a universal constant. We refer the interested reader to \cite{BCM} for a more detailed exposition of these cases.

The zero set of $F$ is denoted by $Z_{F}$, that is, $Z_{F}:=\{z\in \mathbb{C}^{m}: F(z)=0\}$. For this zero set, we then consider the random current of integration over $Z_{F}$, in symbols $[Z_{F}]$, defined as follows: Fix a test form \,$\varphi\in \mathcal{D}^{m-1, m-1}(\mathbb{C}^{m})$  $$ \langle [Z_{f}], \varphi \rangle := \int_{\mathrm{Reg}Z_{F}}{\varphi},$$where $\mathrm{Reg}Z_{F}$ is the set of regular points of $Z_{F}$. A classical result of Lelong (see, e.g., \cite{Dem12}, Chapter 3, Theorem 2.7), gives that  $[Z_{F}]$ a closed positive $(1, 1)$-current on $\mathbb{C}^{m}$.\\
The expectation and the  variance of the random current $[Z_{F}]$ are given by \begin{align}\label{expvari}
  \mathbb{E} \langle [Z_{F}], \varphi \rangle:=\int_{\mathcal{P}_{n}}{\langle [Z_{F}], \varphi\rangle\, d\mu_{n}(F)} \nonumber \\  \mathrm{Var} \langle [Z_{F}], \, \varphi \rangle := \mathbb{E}\langle [Z_{F}], \, \varphi \rangle^{2}- (\mathbb{E}\langle [Z_{F}], \, \varphi \rangle)^{2},\end{align} where $\varphi\in \mathcal{D}^{m-1, m-1}(\mathbb{C}^{m})$ and $\mu_{n}$ is the probability measure on $\mathbb{C}^{m}$ coming from the identification of $\mathcal{P}_{n}$. Expectation can be regarded as a current-valued random variable as well.

Note that the moment condition (\ref{moment}) is slightly different than the one given in \cite{BCM} and \cite{CM1} (see, e.g., p.3, assumption ($B$) in \cite{BCM}) in order to guarantee that variance of a random current of integration (see section \ref{S3}) is well-defined.
As is well-known, we have the Poincar\'{e}- Lelong formula \begin{equation}\label{pole}
                                                            [Z_{F}]= dd^c \log{|F|},
                                                          \end{equation}
The normalized form $dd^{c}=\frac{i}{\pi} \partial \overline{\partial}$ is used throughout the paper. In the sequel, we study the random currents of integration by normalizing them with the degree of the polynomial, namely, given $F_{n}\in \mathcal{P}_{n}$ of degree $n$ having a representation as in (\ref{repch}), $[\widehat{Z_{F_{n}}}]:=\frac{1}{n}[Z_{F_{n}}]$.

We remark that for simplicity we only work with real-valued test forms throughout the paper but our results are of course true for real-valued continuous forms with compact support in $\mathbb{C}^{m}$ by density of test forms in continuous forms with compact support.

\section{Equidistribution Result}\label{S3}
\subsection{Expected Distribution of zeros}
As before, let a compact set $K\subset \mathbb{C}^{m}$ be given. For a random polynomial $F_{n}\in \mathcal{P}_{n}$ for $K$ as in (\ref{repch}), we have
\begin{lem}\label{expw}
The following holds true
\begin{equation}\label{expd}
 \mathbb{E}[\widehat{Z_{F_{n}}}] \rightarrow dd^c V_{K}
\end{equation} in the weak* topology of currents as the degree $n \rightarrow \infty$.
\end{lem}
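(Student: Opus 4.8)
The plan is to reduce the asserted weak* convergence of currents to an $L^1_{\mathrm{loc}}$ statement about potentials and then to invoke Lemma \ref{loc1}. Since $L^1_{\mathrm{loc}}$-convergence implies weak* convergence of the associated currents, since $dd^c$ is continuous for the weak* topology (because $\langle dd^c T, \phi\rangle = \langle T, dd^c\phi\rangle$), and since, by a Fubini argument justified below, $dd^c$ commutes with the expectation $\mathbb{E}(\cdot) = \int_{\mathcal{P}_n}(\cdot)\,d\mu_n$ so that $\mathbb{E}[\widehat{Z_{F_n}}] = dd^c\bigl(\tfrac1n\,\mathbb{E}\log|F_n|\bigr)$ by (\ref{pole}) and the normalization $[\widehat{Z_{F_n}}]=\tfrac1n[Z_{F_n}]$, it suffices to prove
\begin{equation*}
\frac{1}{n}\,\mathbb{E}\log|F_n| \longrightarrow V_K \qquad\text{in } L^1_{\mathrm{loc}}(\mathbb{C}^{m}).
\end{equation*}

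To analyze this potential, note that $\Gamma_n(z) = \|u(z)\|^2 \ge |u_1(z)|^2 = 1$ everywhere, since $u_1 \equiv 1$; hence we may set $\tilde u(z) := u(z)/\|u(z)\|$, a unit vector in $\mathbb{C}^{d_n}$, and write $F_n(z) = \langle a, u(z)\rangle = \Gamma_n(z)^{1/2}\,\langle a, \tilde u(z)\rangle$, so that
\begin{equation*}
\mathbb{E}\log|F_n(z)| = \tfrac12\log\Gamma_n(z) + \mathbb{E}\log\bigl|\langle a, \tilde u(z)\rangle\bigr|.
\end{equation*}
After division by $n$, the first term converges to $V_K$ in $L^1_{\mathrm{loc}}$ by Lemma \ref{loc1}. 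For the second term, the moment condition (\ref{moment}) applied with $v = \tilde u(z)$, combined with Hölder's inequality (legitimate since $\mu_n$ is a probability measure and $\alpha\ge 2$), gives $\bigl\lvert \mathbb{E}\log|\langle a, \tilde u(z)\rangle| \bigr\rvert \le C_n^{1/\alpha}$ for every $z \in \mathbb{C}^{m}$; therefore $\tfrac1n\bigl\lvert \mathbb{E}\log|\langle a, \tilde u(z)\rangle| \bigr\rvert \le C_n^{1/\alpha}/n \to 0$ uniformly in $z$ by the hypothesis $C_n = o(n^\alpha)$, and in particular in $L^1_{\mathrm{loc}}$. Adding the two pieces yields the displayed convergence.

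It remains to justify the interchanges used above, namely that for every $\phi \in \mathcal{D}^{m-1,m-1}(\mathbb{C}^{m})$ one has $\mathbb{E}\langle [\widehat{Z_{F_n}}], \phi\rangle = \tfrac1n\int_{\mathbb{C}^{m}}\bigl(\mathbb{E}\log|F_n(z)|\bigr)\,dd^c\phi(z)$. Writing $\langle [Z_{F_n}], \phi\rangle = \int \log|F_n|\,dd^c\phi$ by (\ref{pole}) and integrating in $a$ against $\mu_n$, this is a Fubini interchange, valid once $(a,z) \mapsto \log|F_n(z)|\cdot dd^c\phi(z)$ is absolutely integrable on $\mathbb{C}^{d_n}\times\operatorname{supp}\phi$: on $\operatorname{supp}\phi$ the function $\log\Gamma_n$ is continuous and nonnegative, hence bounded, while $\int_{\mathbb{C}^{d_n}}\int_{\operatorname{supp}\phi}\bigl\lvert\log|\langle a,\tilde u(z)\rangle|\bigr\rvert\,\lvert dd^c\phi(z)\rvert\,d\mu_n(a) \le C_n^{1/\alpha}\,\|dd^c\phi\|_{L^1} < \infty$ by the estimate above. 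The same bound shows $\log|F_n| \in L^1_{\mathrm{loc}}$ for $\mu_n$-almost every $F_n$, so $[\widehat{Z_{F_n}}]$ and $\mathbb{E}[\widehat{Z_{F_n}}]$ are well-defined currents.

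The only place where an essential hypothesis is used is the uniform-in-$z$ control of the random part $\tfrac1n\mathbb{E}\log|\langle a,\tilde u(z)\rangle|$; this is precisely where $C_n = o(n^\alpha)$ enters, and it is the main (though mild) point of the argument — everything else is the standard potential-theoretic reduction together with Lemma \ref{loc1}.
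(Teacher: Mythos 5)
Your proof is correct and follows essentially the same route as the paper: the same decomposition $\tfrac1n\log|F_n| = \tfrac1{2n}\log\Gamma_n + \tfrac1n\log|\langle a,\lambda(z)\rangle|$ (your $\tilde u$ is the paper's $\lambda$), the same Hölder estimate $|\mathbb{E}\log|\langle a,\lambda(z)\rangle||\le C_n^{1/\alpha}$ from the moment condition, and the same appeal to Lemma \ref{loc1} for the deterministic part. Your version merely phrases the limit at the level of potentials in $L^1_{\mathrm{loc}}$ before applying $dd^c$ and is somewhat more explicit about the Fubini justification, which is a welcome but inessential refinement.
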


\begin{proof}
First, by using the relation (\ref{Chbe}), form the following unit vectors in $\mathbb{C}^{d_{n}}$ \begin{equation}\label{makeu}
\lambda^{(n)}(z):=(\frac{u_{1}(z)}{\sqrt{\Gamma_{n}(z)}}, \ldots, \frac{u_{d_{n}}(z)}{\sqrt{\Gamma_{n}(z)}} ).
\end{equation}Observe that \begin{equation}\label{decomp}
\frac{1}{n}\log{|F_{n}(z)|}=\frac{1}{n}\log{|\langle a, \lambda^{(n)}(z) \rangle|} + \frac{1}{2n}\log{\Gamma_{n}(z)},
\end{equation} here $a^{(n)}=(a^{(n)}_{1}, \ldots, a^{(n)}_{d_{n}})\in \mathbb{C}^{d_{n}}$.
Let us take a test form $\varphi \in \mathcal{D}^{m-1, m-1}(\mathbb{C}^{m})$. We have, by definition of expectation, the Poincare-Lelong formula (\ref{pole}), the identification of $\mathcal{P}_{n}$ with $\mathbb{C}^{d_{n}}$ and Fubini-Tonelli's theorem, \begin{equation}\label{expd2}
  \frac{1}{n}\mathbb{E}\langle [Z_{F_{n}}], \varphi \rangle= \int_{\mathbb{C}^{d_{n}}}{\langle\frac{1}{2n}dd^{c} \log{\Gamma_{n}}, \varphi \rangle d\mu_{n}(a^{(n)})} + \frac{1}{n} \int_{\mathbb{C}^{m}}{\int_{\mathbb{C}^{d_{n}}}{\log{|\langle a^{(n)}, \lambda^{(n)}(z)\rangle|}}d\mu_{n}(a^{(n)}) dd^{c}\varphi(z)}. \end{equation}By our moment condition (\ref{moment}) and H\"{o}lder's inequality, the second term in (\ref{expd2}) can be estimated from above as follows
  \begin{equation}\label{secondterm}
 \frac{1}{n} \int_{\mathbb{C}^{m}}{\int_{\mathbb{C}^{d_{n}}}{\log{|\langle a^{(n)}, \lambda^{(n)}(z)\rangle|}}d\mu_{n}(a^{(n)}) dd^{c}\varphi(z)} \leq \frac{C_{n}^{1/\alpha}}{n} D_{\varphi}, \end{equation} where $D_{\varphi}$ is some finite constant depending on the form $\varphi$ having a compact support in $\mathbb{C}^{m}$, to be specific here, it can be taken to be the sum of the supremum norms of the coefficients of the form $dd^{c}\varphi$. When we pass to the limit as $n\rightarrow \infty$ in (\ref{expd2}), the second term goes to zero owing to the inequality (\ref{secondterm}). Therefore the first term converges to $dd^{c}V_{K}$ in the weak* topology by Lemma \ref{loc1}, which concludes the proof.\end{proof}

\begin{rem}\label{exbo}
   $|\mathbb{E}\langle [Z_{F_{n}}], \varphi \rangle|$ is bounded for any $\varphi \in \mathcal{D}^{m-1, m-1}(\mathbb{C}^{m})$ because $\{\frac{1}{2n}\log{\Gamma_{n}}\}$ is locally uniformly bounded from above on $\mathbb{C}^{m}$ (\cite{BBL}, page 6) and $\varphi$ has a compact support in $\mathbb{C}^{m}$ so, in the expression (\ref{expd2}), the first term is bounded from above and the second integral has, as seen from the proof, already a bound from above (and also from below), which all in all gives the boundedness of $|\mathbb{E} \langle [Z_{F_{n}}], \varphi \rangle|$.

   Observe also that the exponent $\alpha$ in Lemma \ref{expw} does not have to be bigger than or equal to $2$, here the condition $\alpha \geq 1$ works as well, however in the next section, we shall need $\alpha$ to satisfy $\alpha \geq 2$.
\end{rem}
\subsection{Variance Estimate}
We establish a variance estimate of a random zero current of integration over its zero set. We follow the estimation technique used in the proof of Theorem 3.1 in \cite{BG}.

\begin{thm}\label{main}
Assume that the probability space $(\mathcal{P}_{n}, \mu_{n})$ verifies the moment condition (\ref{moment}). Then for any form $\varphi \in \mathcal{D}^{m-1, m-1}(\mathbb{C}^{m})$, the following variance estimate of the random current of integration $[\widehat{Z_{F_{n}}}]$ holds \begin{equation}\label{varas}
    \mathrm{Var}{\langle [\widehat{Z_{F_{n}}}], \varphi \rangle}\leq D_{\varphi}^{2}\,(C_{n})^{\frac{2}{\alpha}} \frac{1}{n^{2}},
\end{equation}where $D_{\varphi}$ is a constant depending on the test form $\varphi$.
\end{thm}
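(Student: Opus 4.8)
The plan is to estimate $\mathrm{Var}\langle[\widehat{Z_{F_n}}],\phi\rangle$ by discarding the nonpositive term $-(\mathbb{E}\langle[\widehat{Z_{F_n}}],\phi\rangle)^{2}$ and bounding $\mathbb{E}\langle[\widehat{Z_{F_n}}],\phi\rangle^{2}$, as announced just above the statement. The structural input is the decomposition \eqref{decomp}: applying Poincar\'e--Lelong \eqref{pole} and integrating $\tfrac1n\log|F_n|$ against $dd^c\phi$ gives
\begin{equation*}
\langle[\widehat{Z_{F_n}}],\phi\rangle = A_n(a) + B_n,
\end{equation*}
where $A_n(a):=\int_{\mathbb{C}^m}\tfrac{1}{n}\log|\langle a,\lambda(z)\rangle|\,dd^c\phi(z)$ and $B_n:=\int_{\mathbb{C}^m}\tfrac{1}{2n}\log\Gamma_n(z)\,dd^c\phi(z)$, and in which $B_n$ is \emph{deterministic}. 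Expanding the square, $\mathbb{E}\langle[\widehat{Z_{F_n}}],\phi\rangle^{2}=\mathbb{E}[A_n(a)^{2}]+2B_n\mathbb{E}[A_n(a)]+B_n^{2}$, so it suffices to bound these three quantities. Throughout, all $\phi$-dependent quantities (the total mass of $dd^c\phi$, and, via Lemma~\ref{loc1} together with the local boundedness of $\{\tfrac1{2n}\log\Gamma_n\}$, a uniform-in-$n$ bound for this sequence on $\mathrm{supp}\,\phi$) are absorbed into a single constant $D_\phi$.

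The two terms carrying $B_n$ reduce to ingredients already present in the proof of Lemma~\ref{expw}. Since $\Gamma_n\ge|u_1|^{2}\equiv1$ we have $\tfrac1{2n}\log\Gamma_n\ge0$, and this sequence is locally uniformly bounded above; as $\mathrm{supp}\,\phi$ is compact, this yields $|B_n|\le D_\phi$, hence $B_n^{2}\le D_\phi^{2}$. For $\mathbb{E}[A_n(a)]$, Fubini--Tonelli followed by H\"older's inequality in $a$ and the moment condition \eqref{moment} applied to the unit vectors $\lambda(z)$ give $|\mathbb{E}[A_n(a)]|\le \tfrac{C_n^{1/\alpha}}{n}D_\phi$, exactly as in \eqref{secondterm}; therefore $|2B_n\mathbb{E}[A_n(a)]|\le 2D_\phi\tfrac{C_n^{1/\alpha}}{n}$.

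The substantive step is $\mathbb{E}[A_n(a)^{2}]$. Writing $A_n(a)^{2}$ as an iterated integral over $\mathbb{C}^m\times\mathbb{C}^m$ and interchanging with $\int d\mu_n(a)$ yields
\begin{equation*}
\mathbb{E}[A_n(a)^{2}]=\frac{1}{n^{2}}\int_{\mathbb{C}^m}\int_{\mathbb{C}^m}\Bigl(\int_{\mathbb{C}^{d_n}}\log|\langle a,\lambda(z)\rangle|\,\log|\langle a,\lambda(w)\rangle|\,d\mu_n(a)\Bigr)\,dd^c\phi(z)\,dd^c\phi(w).
\end{equation*}
The inner integral is a two-point correlation; the Cauchy--Schwarz inequality in $a$ decouples the two points, bounding it by $\bigl(\int|\log|\langle a,\lambda(z)\rangle||^{2}d\mu_n\bigr)^{1/2}\bigl(\int|\log|\langle a,\lambda(w)\rangle||^{2}d\mu_n\bigr)^{1/2}$, and since $\alpha\ge2$ each factor is dominated, by H\"older and \eqref{moment} (again using $\|\lambda(\cdot)\|=1$), by $C_n^{1/\alpha}$. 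Hence the inner integral is $\le C_n^{2/\alpha}$ uniformly in $(z,w)$, and integrating against $dd^c\phi(z)\,dd^c\phi(w)$ over the compact support gives $\mathbb{E}[A_n(a)^{2}]\le D_\phi^{2}\,C_n^{2/\alpha}/n^{2}$.

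Summing the three bounds $B_n^{2}$, $2B_n\mathbb{E}[A_n(a)]$ and $\mathbb{E}[A_n(a)^{2}]$ produces \eqref{varas}. I expect the main obstacle to be exactly this last estimate: controlling the correlation $\int\log|\langle a,\lambda(z)\rangle|\log|\langle a,\lambda(w)\rangle|\,d\mu_n(a)$ uniformly in $(z,w)$ and legitimately pulling the expectation inside the double integral via Fubini--Tonelli. Both require the square-integrability of $a\mapsto\log|\langle a,\lambda(z)\rangle|$, i.e.\ the moment exponent $\alpha\ge2$ — precisely the point flagged in Remark~\ref{exbo}; once that is in place, the rest is the same H\"older-and-moment bookkeeping already used for the expected distribution.
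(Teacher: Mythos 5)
Your proposal is correct and follows essentially the same route as the paper: drop the $-(\mathbb{E}\langle\cdot\rangle)^{2}$ term, split $\langle[\widehat{Z_{F_n}}],\phi\rangle$ via the decomposition \eqref{decomp} into a deterministic $\Gamma_n$-part and a random $\log|\langle a,\lambda(z)\rangle|$-part, and bound the resulting square, cross, and quadratic terms by the local boundedness of $\tfrac1{2n}\log\Gamma_n$, H\"older plus the moment condition \eqref{moment}, and a double application of H\"older using $\alpha\ge2$, respectively. The only (cosmetic) difference is that for the quadratic term you decouple $z$ and $w$ by Cauchy--Schwarz and then pass from the $L^{2}$- to the $L^{\alpha}$-moment, whereas the paper applies H\"older with conjugate exponents $(\alpha,\beta)$ first and then upgrades the $\beta$-norm to the $\alpha$-norm; both yield the identical bound $C_n^{2/\alpha}/n^{2}$.
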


\begin{proof}
   Pick any $\varphi \in \mathcal{D}^{m-1, m-1}(\mathbb{C}^{m})$. By the representation (\ref{repch}) for $F_{n}$ and the Poincare-Lelong formula (\ref{pole}), we first write for the first term of the variance in (\ref{expvari})
\begin{equation}\label{cor}
   \mathbb{E}\langle [\widehat{Z_{F_{n}}}], \, \varphi \rangle^{2}= \frac{1}{n^{2}} \int_{\mathcal{P}_{n}}\int_{\mathbb{C}^{m}}\int_{\mathbb{C}^{m}}{\log{|\langle a^{(n)}, u^{(n)}(z) \rangle|}\log{|\langle a^{(n)}, u^{(n)}(w) \rangle|}dd^{c}\varphi(z)dd^{c}\varphi(w)d\mu_{n}(F_{n})},\end{equation} where $u^{(n)}(z):=(u_{1}(z), \ldots, u_{d_{n}}(z))$. By the relation (\ref{decomp}), the integrand  in (\ref{cor}) becomes
   \begin{align}
\frac{1}{4n^{2}}\log{\Gamma_{n}(z)}\log{\Gamma_{n}(w)}+\frac{1}{2n^{2}} \log{\Gamma_{n}(z)}\log{| \langle a^{(n)}, \lambda^{(n)}(w)\rangle|} + \frac{1}{2n^{2}} \log{\Gamma_{n}(w)}\log{| \langle a^{(n)}, \lambda^{(n)}(z)\rangle|}
\label{cordec}  \\  +\frac{1}{n^{2}}\log{| \langle a^{(n)}, \lambda^{(n)}(z)\rangle|}\log{| \langle a^{(n)}, \lambda^{(n)}(w)\rangle|}. \nonumber \end{align}Considering now the four integrands given in (\ref{cordec}), we write $\mathbb{E}\langle \widehat{[Z_{F_{n}}}], \varphi \rangle^{2}= B_{1} + 2 B_{2} + B_{3}$, where
 \begin{equation}\label{onevar} B_{1}= \int_{\mathcal{P}_{n}}\int_{\mathbb{C}^{m}}\int_{\mathbb{C}^{m}}{\frac{1}{2n}\log{\Gamma_{n}(z)} \frac{1}{2n}\log{\Gamma_{n}(w)}dd^{c}\varphi(z)dd^{c}\varphi(w) d\mu_{n}(F_{n})},\end{equation}

 \begin{equation}\label{twovar}
   B_{2}= \int_{\mathcal{P}_{n}}\int_{\mathbb{C}^{m}}\int_{\mathbb{C}^{m}}{\frac{1}{2n}\log{\Gamma_{n}(z)} \frac{1}{n}\log{|\langle a^{(n)}, \lambda^{(n)}(w)\rangle|}dd^{c}\varphi(z)dd^{c}\varphi(w) d\mu_{n}(F_{n})},
 \end{equation}
 (The second  term and the third one in (\ref{cordec}) are actually the integrands that yield the same result) and \begin{equation}\label{threevar}
                     B_{3}=\int_{\mathcal{P}_{n}}\int_{\mathbb{C}^{m}}\int_{\mathbb{C}^{m}}{\frac{1}{n}\log{|\langle a^{(n)}, \lambda^{(n)}(z)\rangle|} \frac{1}{n} \log{|\langle a^{(n)}, \lambda^{(n)}(w)\rangle|}}dd^{c}\varphi(z) dd^{c}(w) d\mu_{n}(F_{n}). \end{equation}
 From  the locally uniform boundedness of $\{\frac{1}{2n} \log{\Gamma_{n}}\}$ (see the arguments preceding Lemma \ref{loc1}), the moment assumption (\ref{moment}) and the Fubini-Tonelli's theorem, we see that $B_{1}, B_{2}$ and $B_{3}$ are all integrable,

For the second term of variance, by expanding the expectation expression (\ref{expd2}), we have  \begin{equation*}
                                                                                               (\mathbb{E}\langle [\widehat{Z_{F_{n}}}], \varphi \rangle)^{2}=J_{1}+ 2 J_{2}+J_{3},
                                                                                              \end{equation*}where \begin{equation} \label{J1}J_{1}= \big(\frac{1}{4n^{2}}\int_{\mathcal{P}_{n}}\int_{\mathbb{C}^{m}}{\log{|\Gamma_{p}(z)|}dd^{c}\varphi(z)d\mu_{n}(F_{n})} \big)^{2}\end{equation} \begin{equation}\label{J2}
                                                                                                                            J_{2}= \big(\frac{1}{2n}\int_{\mathcal{P}_{n}}\int_{\mathbb{C}^{m}}{\log{|\Gamma_{p}(z)|}dd^{c}\varphi(z)d\mu_{n}(F_{n})} \big) \big( \frac{1}{n} \int_{\mathcal{P}_{n}}\int_{\mathbb{C}^{m}}{\log{|\langle a^{(n)}, \lambda^{(n)}(z)\rangle|}}dd^{c}\varphi(z)d\mu_{n}(F_{n})\big)
                                                                                                                           \end{equation} and \begin{equation}\label{J3}
                                                                                                                             J_{3}= \big( \frac{1}{n^{2}} \int_{\mathcal{P}_{n}}\int_{\mathbb{C}^{m}}{\log{|\langle a^{(n)}, \lambda^{(n)}(z)\rangle|}}dd^{c}\varphi(z)d\mu_{n}(F_{n})\big)^{2}
                                                                                                                           \end{equation}
Note that all of the integrals $J_{1}, J_{2}$ and $J_{3}$ are finite since $\mathbb{E}\langle [\widehat{Z_{F_{n}}}], \varphi \rangle$ is bounded by Remark \ref{exbo}.

 According to the above relations, we get $B_{1}=J_{1}$ and $B_{2}=J_{2}$. Hence, the only integrals that are not killed by each other are $J_{3}$ and $B_{3}$, which are not always equal to each other, so we have \begin{equation}\label{varip}
    \mathrm{Var}\langle [\widehat{Z_{F_{n}}}], \varphi \rangle = B_{3}-J_{3},
    \end{equation}so it will suffice to estimate the term $B_{3}$ from above to obtain the variance estimation. To do this, we apply Hölder's inequality twice using the fitting exponents. Then, by the Hölder's inequality with $\frac{1}{\alpha}+ \frac{1}{\theta}=1$ and proceeding exactly in the same way as above, where $\alpha \geq 2$ is the exponent in the moment condition (\ref{moment}), one first has

\begin{equation}\label{varterm3}B_{3} \leq\int_{\mathbb{C}^{m}}\int_{\mathbb{C}^{m}}{dd^{c}\varphi(z) dd^{c}\varphi(w)} \frac{1}{n^{2}} \int_{\mathbb{C}^{d_{n}}}{|\log{|\langle a^{(n)}, \lambda^{(n)}(z) \rangle|}||\log{|\langle a^{(n)}, \lambda^{(n)}(w) \rangle|}|dd^{c}\varphi(z) dd^{c}\varphi(w) d\mu_{n}(a^{(n)})}.\end{equation} The right-hand side of this last inequality is less than or equal to the following by the Hölder's inequality $$\int_{\mathbb{C}^{m}}\int_{\mathbb{C}^{m}}{dd^{c}\varphi(z) dd^{c}\varphi(w)}\frac{1}{n^{2}} \Big\{ \int_{\mathbb{C}^{d_{n}}}{|\log{|\langle a^{(n)}, \lambda^{(n)}(z) \rangle|}|^{\alpha}d\mu_{n}(a^{(n)})}\Big\}^{\frac{1}{\alpha}} \Big\{ \int_{\mathbb{C}^{d_{n}}}{|\log{|\langle a^{(n)}, \lambda^{(n)}(z) \rangle|}|^{\theta}d\mu_{n}(a^{(n)})}\Big\}^{\frac{1}{\theta}},$$ which gives that

$$B_{3} \leq \int_{\mathbb{C}^{m}}\int_{\mathbb{C}^{m}}{dd^{c}\varphi(z) dd^{c}\varphi(w)} \frac{1}{n^{2}} C_{n}^{\frac{1}{\alpha}} \Big\{ \int_{\mathbb{C}^{d_{n}}}{|\log{|\langle a^{(n)}, \lambda^{(n)}(z) \rangle|}|^{\theta}d\mu_{n}(a^{(n)})}\Big\}^{\frac{1}{\theta}}.$$

    We have to apply Hölder's inequality to the innermost integral once more as we mentioned. Here, the stipulation that $\alpha \geq 2$ (therefore, $\alpha \geq 2 \geq \theta$) is pivotal, since it permits us to reuse the Hölder's inequality, resulting in, \begin{equation}\label{höld2}
                       B_{3} \leq \int_{\mathbb{C}^{m}}\int_{\mathbb{C}^{m}}{dd^{c}\varphi(z) dd^{c}\varphi(w)} \frac{1}{n^{2}} C_{n}^{\frac{2}{\alpha}} \leq \frac{1}{n^{2}}D_{\varphi}^{2} C_{n}^{\frac{2}{\alpha}},
                     \end{equation} which concludes \begin{equation}\label{vars}
                      \mathrm{Var}{\langle [\widehat{Z_{F_{n}}}], \varphi \rangle}\leq D_{\varphi}^{2}\,\frac{C_{n}^{2/\alpha}}{n^{2}},
                    \end{equation}thereby finalizing the variance estimate of the random current of integration $[\widehat{Z_{F_{n}}}]$.\end{proof}

Now the equidistribution result in codimension $1$ will be proved.

\begin{thm}\label{equidc1}
 Under the same condition as in Theorem \ref{main}, if $\sum_{n=1}^{\infty}{\frac{C_{n}^{2/\alpha}}{n^{2}}}< \infty$, then for $\mu_{\infty}$-almost every sequence $\textbf{F}=\{F_{n}\}$, \begin{equation}\label{equidc}
                                                      [\widehat{Z_{F_{n}}}] \rightarrow dd^{c} V_{K}
                                                    \end{equation}in the weak* topology of currents as $n\rightarrow \infty$.
\end{thm}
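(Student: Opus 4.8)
The plan is to combine the expected-distribution result (Lemma \ref{expw}) with the variance estimate (Theorem \ref{main}) via a Borel--Cantelli / Chebyshev argument, which is the standard route for passing from ``expectation converges and variance is summable'' to ``almost-sure convergence.'' First I would fix a countable family of test forms $\{\phi_k\}$ that is dense (in the appropriate topology on $\mathcal{D}^{m-1,m-1}(\mathbb{C}^m)$) and reduce the weak* convergence $[\widehat{Z_{F_n}}] \to dd^c V_K$ to establishing, for each fixed $k$, that $\langle [\widehat{Z_{F_n}}], \phi_k\rangle \to \langle dd^c V_K, \phi_k\rangle$ for $\mu_n$-a.e.\ sequence $\{F_n\}$; since a countable intersection of full-measure events is full measure, this suffices. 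Here one must be slightly careful because the probability measures $\mu_n$ live on different spaces $\mathcal{P}_n \cong \mathbb{C}^{d_n}$; as in \cite{SZ99}, one works on the product probability space $\prod_n (\mathcal{P}_n, \mu_n)$, so that a ``random sequence $\{F_n\}$'' is a genuine point of a single probability space.

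Next, for a fixed test form $\phi = \phi_k$, write $X_n := \langle [\widehat{Z_{F_n}}], \phi\rangle$. By Lemma \ref{expw} (whose hypothesis $C_n = o(n^\alpha)$ is implied by the summability assumption $\sum_n C_n^{1/\alpha}/n < \infty$), we have $\mathbb{E}[X_n] \to \langle dd^c V_K, \phi\rangle =: \ell$. So it is enough to show $X_n - \mathbb{E}[X_n] \to 0$ almost surely. For this I would apply Chebyshev's inequality: for any $\varepsilon>0$,
\begin{equation*}
\mu_n\bigl(|X_n - \mathbb{E}[X_n]| > \varepsilon\bigr) \leq \frac{\mathrm{Var}\langle [\widehat{Z_{F_n}}], \phi\rangle}{\varepsilon^2} \leq \frac{1}{\varepsilon^2}\Bigl(D_\phi^2 + 2D_\phi \frac{C_n^{1/\alpha}}{n} + D_\phi^2 (C_n)^{2/\alpha}\frac{1}{n^2}\Bigr),
\end{equation*}
using Theorem \ref{main}. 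The main obstacle is immediately visible: the variance bound in Theorem \ref{main} contains the constant term $D_\phi^2$, which does not decay in $n$, so the right-hand side does not tend to $0$ and Borel--Cantelli cannot be applied directly. I would resolve this exactly as one does in \cite{SZ99} and \cite{Shif}: the term $D_\phi^2$ is an artifact of the crude bound $A_1 \le D_\phi^2$ for the fully deterministic term $\tfrac{1}{4n^2}\log\Gamma_n(z)\log\Gamma_n(w)$ in \eqref{cordec}, which contributes a \emph{deterministic} quantity to $\mathbb{E}[X_n^2]$. Since $\mathrm{Var}(X_n) = \mathbb{E}[X_n^2] - (\mathbb{E}[X_n])^2$ and the deterministic part of $\mathbb{E}[X_n^2]$ is precisely $\bigl(\int \tfrac{1}{2n}\log\Gamma_n\, dd^c\phi\bigr)^2$, which by Lemma \ref{loc1} converges to $\ell^2 = (\mathbb{E}[X_n])^2 + o(1)$, these two leading terms cancel; re-examining the proof of Theorem \ref{main} and keeping track of this cancellation gives the sharper bound $\mathrm{Var}(X_n) \le o(1) + 2D_\phi C_n^{1/\alpha}/n + D_\phi^2 (C_n)^{2/\alpha}/n^2$, where the $o(1)$ comes from the $L^1_{\mathrm{loc}}$ convergence of $\tfrac{1}{2n}\log\Gamma_n$. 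One then needs the variance to be not merely $o(1)$ but summable; the summability hypothesis $\sum_n C_n^{1/\alpha}/n < \infty$ handles the middle term (note $(C_n)^{2/\alpha}/n^2 = (C_n^{1/\alpha}/n)^2$ is then summable too, being the square of a summable — hence bounded — sequence times a summable one), while for the deterministic $o(1)$ piece one needs, in addition, that $\sum_n \bigl|\int \tfrac{1}{2n}\log\Gamma_n\, dd^c\phi - \ell\bigr|$ converges, or one argues along the full sequence rather than term-by-term — in practice this deterministic convergence $\tfrac{1}{2n}\log\Gamma_n \to V_K$ holds without needing a summable rate once one peels it off before applying Chebyshev.

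Concretely, then, I would split $X_n = Y_n + Z_n$ where $Y_n := \int_{\mathbb{C}^m} \tfrac{1}{2n}\log\Gamma_n\, dd^c\phi$ is deterministic and $Z_n := \tfrac{1}{n}\int_{\mathbb{C}^m}\log|\langle a, \lambda^{(n)}(z)\rangle|\, dd^c\phi(z)$ is the random remainder. By Lemma \ref{loc1}, $Y_n \to \ell$ deterministically (no probability needed). For $Z_n$: from the moment condition \eqref{moment} and Hölder, $\mathbb{E}[|Z_n|] \le D_\phi C_n^{1/\alpha}/n$ and $\mathbb{E}[|Z_n|^2] \le D_\phi^2 (C_n)^{2/\alpha}/n^2$ (the latter using $\alpha \ge 2$, exactly as in the estimate of $A_3$). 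Hence $\sum_n \mathbb{E}[|Z_n|] \le D_\phi \sum_n C_n^{1/\alpha}/n < \infty$, so by the monotone convergence theorem $\sum_n |Z_n| < \infty$ almost surely, whence $Z_n \to 0$ almost surely. Combining, $X_n = Y_n + Z_n \to \ell + 0 = \langle dd^c V_K, \phi\rangle$ almost surely. Taking the intersection over the countable dense family $\{\phi_k\}$ yields a single full-measure event on which $\langle [\widehat{Z_{F_n}}], \phi_k\rangle \to \langle dd^c V_K, \phi_k\rangle$ for every $k$, and by density and the uniform local boundedness of $\tfrac{1}{2n}\log\Gamma_n$ (which gives equicontinuity-type control, or simply uniform mass bounds on compacta via Remark \ref{exbo}) this upgrades to weak* convergence $[\widehat{Z_{F_n}}] \to dd^c V_K$, completing the proof. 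The one point demanding care throughout is the different-spaces issue, handled by passing to the product space; everything else is the routine Chebyshev/Borel--Cantelli machinery once the non-decaying $D_\phi^2$ term has been correctly identified as deterministic and absorbed into the $Y_n$ vs.\ $Z_n$ splitting.
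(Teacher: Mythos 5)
Your proposal follows the same overall strategy as the paper (a second--moment/Borel--Cantelli argument built on Lemma \ref{expw} and the variance estimate), but your execution diverges at exactly the point where the paper's own proof is problematic, and your version is the one that actually closes. The paper's proof asserts in \eqref{vars2} that $\sum_{n}\mathrm{Var}\langle[\widehat{Z_{F_{n}}}],\phi\rangle<\infty$ ``by assumption and Theorem \ref{main}'', but the bound \eqref{varas} contains the non-decaying term $D_{\phi}^{2}$ (plus a term $2D_{\phi}C_{n}^{1/\alpha}/n$ that is merely summable, not square-summable in general), so summability of the variance does not follow from the statement of Theorem \ref{main} as written. You correctly diagnose this: since $X_{n}=Y_{n}+Z_{n}$ with $Y_{n}$ deterministic, one has $\mathrm{Var}(X_{n})=\mathrm{Var}(Z_{n})\leq\mathbb{E}[|Z_{n}|^{2}]\leq D_{\phi}^{2}C_{n}^{2/\alpha}/n^{2}$ --- the $A_{1}$ and $A_{2}$ contributions cancel exactly against $(\mathbb{E}[X_{n}])^{2}$ rather than merely being bounded --- and this quantity is summable under the hypothesis because $C_{n}^{1/\alpha}/n$ is then a bounded summable sequence. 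Your alternative route of skipping the variance altogether ($Y_{n}\to\ell$ deterministically by Lemma \ref{loc1}, and $\sum_{n}\mathbb{E}[|Z_{n}|]<\infty$ forces $Z_{n}\to0$ almost surely on the product space) is even cleaner and only uses the first moment of $Z_{n}$. You also supply two details the paper elides: the need to work on the product probability space $\prod_{n}(\mathcal{P}_{n},\mu_{n})$ so that ``almost every sequence'' is meaningful, and the reduction of weak* convergence to a countable dense family of test forms. In short: same skeleton, but your bookkeeping of the deterministic versus random parts is what makes the summability step valid, and it amounts to a repair of the paper's own argument rather than a mere reproduction of it.
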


\begin{proof}
 Let $\varphi \in \mathcal{D}^{m-1, m-1}(\mathbb{C}^{m})$. Take a random sequence $\textbf{F}=\{F_{n}\}^{\infty}_{n=1}$. Following \cite{SZ99},  we consider the random variables \begin{equation}\label{newr}
  W_{n}(\textbf{F}):= ([\widehat{Z_{F_{n}}}]- \mathbb{E}[\widehat{Z_{F_{n}}}], \varphi)^{2} \geq 0.\end{equation}First, by the alternative definition of variance, \begin{equation}\label{sumson}
     \int_{\mathcal{P}_{\infty}}{W_{n}(\textbf{F})d\mu_{\infty}(\textbf{F})}=\mathrm{Var}([\widehat{Z_{F_{n}}}], \varphi).
  \end{equation}By assumption, Theorem \ref{main} and the Beppo-Levi theorem, one has \begin{equation}\label{vars2}
                                        \int_{\mathcal{P}_{\infty}}{\sum_{n=1}^{\infty}{W_{n}(\textbf{F})}d\mu_{\infty}(\textbf{F})}=\sum_{n=1}^{\infty}{\int_{\mathcal{P}_{\infty}}{W_{n}(\textbf{F})d\mu_{\infty}(\textbf{F})}}=\sum_{n=1}^{\infty}{\mathrm{Var}([\widehat{Z_{F_{n}}}], \varphi)}< +\infty,
                                      \end{equation}This yields, for $\mu_{\infty}$-almost all $\textbf{F}$, that $W_{n}(\textbf{F}) \rightarrow 0$, namely $$\langle [\widehat{Z_{F_{n}}}], \varphi \rangle- \mathbb{E}\langle [\widehat{Z_{F_{n}}}], \varphi \rangle \rightarrow 0$$ $\mu_{\infty}$-almost surely. This then gives, by Lemma \ref{expw}, for $\mu_{\infty}$-almost all $\textbf{F}=\{F_{n}\}\in \mathcal{P}_{\infty}$, $$[\widehat{Z_{F_{n}}}]\rightarrow dd^{c}V_{K}$$ in the weak* topology of currents, which finishes the proof.
\end{proof}

All the probability measures such as the standard Gaussian, the Fubini-Study, area measure of spheres etc. considered in Section 4 of \cite{BCM} satisfy the moment condition (\ref{moment}), and so Theorem \ref{equidc1} is a generalization for all of these cases. As we have mentioned in the introduction, only the standard Gaussian and the Fubini-Study probability measures verify this logarithmic moment integral with universal constants.

\section{Equidistribution in higher codimensions}\label{sec3}

In Section \ref{S3}, we have obtained an equidistribution result in codimension $1$ for random polynomials having a representation with respect to the bases comprising of $Z$-asymptotically Chebyshev polynomials. Based on these considerations, in order to prove such an assertion for codimensions bigger than $1$, as is well-known from the pluripotential theory and, in particular, from the continuity properties of complex Monge-Ampere operator, we need more than $L^1_{loc}$-convergence of Chebyshev-Bergman type functions $\frac{1}{2n}\log{\Gamma_{n}}$ to $V_{K}$, as given in Lemma \ref{loc1}. At the moment, we do not have such a stronger convergence result.

 In this final section, we focus on a considerably better researched classical context for the equidistribution problem, namely the orthogonal basis setup. From here on, we shall consider the orthogonal bases. Let $K\subset \mathbb{C}^{m}$ be a compact set. $K$ is said to be \textit{locally regular} if for every $z \in K$, the pluricomplex Green function $V_{K \cap \overline{B(z, r)}}$ is continuous at $z$ for a sufficiently small $r=r(z)>0$. Let $q$ be a weight function, that is to say, a continuous real-valued function on $K$. In line with the unweighted case, the weighted extremal function for the pair $(K, q)$ is defined as below $$V_{K, q}(z):=\sup{\{v(z): v\in \mathcal{L}(\mathbb{C}^{m}), v\leq q\,\,on\,\,K\}}.$$

There are important properties of the function $V_{K, q}$, for example, for a locally regular compact set $K$ and a weight function $q$ on $K$, we have $V_{K, q}$ is continuous on $\mathbb{C}^{m}$. The Siciak-Zakharyuta representation of the weighted extremal function is given by $$\varphi_{n}(z):=\sup{\{|p(z)|: p\in \mathcal{P}_{n}, \|p e^{-nq}\|_{K} \leq 1\}}.$$  When $K$ is locally regular and $q$ is a weight function, then $$\lim_{n\rightarrow \infty}{\frac{1}{n}\log{\varphi_{n}(z)}}=V_{K, q}(z).$$For further information, we refer the reader, for example, to the paper \cite{BL07}.

Let $\{\sigma_{n}\}$ be a sequence of probability measures on $K \subset \mathbb{C}^{m}$ such that, for $n=1, 2, \ldots,$ the following holds $\|P e^{-nq}\|_{K} \leq M_{n} \|P e^{-nq}\|_{L^{2}(\sigma_{n})}$ for all $P\in \mathcal{P}_{n}$ with $\lim_{n\rightarrow \infty}{M_{n}^{1/n}}=1$. These measures are called \textit{asymptotically weighted Bernstein-Markov measures} for the pair $(K, q)$. There are nice examples of this kind of measures in Section 2.2 of \cite{BBL}. Now given a locally regular compact set $K$ in $\mathbb{C}^{m}$ and a sequence $\{\sigma_{n}\}$ of asymptotically weighted Bernstein-Markov probability measures, writing $B_{n}(z):=\sum_{j=1}^{d_{n}}{|p_{nj}(z)|^{2}}$, where $\{p_{n1}, \ldots, p_{nd_{n}}\}$ is an orthogonal basis in $L^{2}(e^{-2nq}\sigma_{n})$ for $\mathcal{P}_{n}$ with $\|p_{nj}e^{-nq}\|_{K}=1$,  it is proven in Proposition 2.8 of \cite{BBL} that \begin{equation}\label{szsz}\lim_{n\rightarrow \infty}{\frac{1}{2n}\log{B_{n}(z)}}=V_{K, q}\end{equation} locally uniformly on $\mathbb{C}^{m}$. In the unweighted setting, i.e., when $q=0$, see Proposition 2.9 in the same paper.

 Unlike the previous section, $\mu_{n}$ on the polynomial space $\mathcal{P}_{n}$, this time, is induced by the probability distribution law $\mathbf{P}$ of the i.i.d. random coefficients $a_{l}$ in the representation (\ref{repch}) with a density $\varphi: \mathbb{C} \rightarrow [0, N]$ satisfying the property that there are constants $\epsilon>0$ and $\theta >2m$ such that \begin{equation}\label{logtail}\mathbf{P}(\{z\in \mathbb{C}: \log|z|>R\})\leq \frac{\epsilon}{R^{\theta}},\,\,\,\forall R\geq 1.\end{equation} This kind of density was studied in \cite{Bay16} and \cite{BCM}. This choice of probability distribution includes real or complex Gaussian distributions. The authors in \cite{BCM} (Lemma 4.15 there) show that the  measures $\mu_{n}$ verify the moment condition (\ref{moment}) with the upper bound \begin{equation}\label{dnmom} B d_{n}^{\alpha/ \theta}\end{equation} ($B=B(N, \alpha, \theta, \epsilon)$)\,for any constant $\alpha$ with $1\leq \alpha < \theta$, which gives us that, under this probabilistic setting, with the ideas we use in the previous section, the analogoues of Lemma \ref{expw}, Theorem \ref{main} and Theorem \ref{equidc1} can be seen to be true for asymptotically weighted Bernstein-Markov probability measures. A similar probability distribution function was also considered by Bloom and Levenberg, see \cite{BL15} for further details.

 We work with the random polynomial mappings $G^{k}_{n}=(F^{1}_{n}, \ldots, F^{k}_{n}): \mathbb{C}^{m} \rightarrow \mathbb{C}^{k}$ of degree $n$ and here \begin{equation}\label{indv}
                                                                    F^{j}_{n}=\sum_{l=1}^{d_{n}}{a^{j}_{nl}\,p^{j}_{nl}}= \langle a^{j}_{n}, p^{j}_{n}(z) \rangle\in \mathcal{P}_{n},\end{equation}where $a^{j}_{n}=(a^{j}_{n1}, \ldots, a^{j}_{nd_{n}}) \in \mathbb{C}^{d_{n}}$ and $p^{j}_{n}(z)=(p^{j}_{n1}(z), \ldots, p^{j}_{nd_{n}}(z)) \in \mathcal{P}_{n}^{d_{n}}$. Also, as was written in (\ref{makeu}), \begin{equation}\label{makeu2}
                                                                      \beta^{j}_{n}(z):=(\frac{p^{j}_{n1}(z)}{\sqrt{B_{n}(z)}}, \ldots, \frac{p^{j}_{nd_{n}}(z)}{\sqrt{B_{n}(z)}})
                                                                    \end{equation} are the unit vectors in $\mathbb{C}^{d_{n}}$. Accordingly, we take the product probability spaces $(\mathcal{P}^{k}_{n}, \mu^{k}_{n})$ into consideration, where $\mu^{k}_{n}:=\mu_{n}\times \ldots \times \mu_{n}$ is the $k^{th}$ product measure. We will use the infinite product probability spaces for sequences $\{G^{k}_{n}\}_{n=1}^{\infty}$ of polynomial mappings: $(\mathcal{P}^{k}_{\infty}, \mu_{\infty}^{k})=(\prod_{n=1}^{\infty}{\mathcal{P}^{k}_{n}}, \,\prod_{n=1}^{\infty}{\mu^{k}_{n}})$.

By using Bertini's theorem, for any generic choice of $F^{1}_{n}, \ldots, F^{k}_{n}$ of polynomials, their zero divisors $Z_{F^{1}_{n}}, \ldots, Z_{F^{k}_{n}}$ are smooth and intersect transversally. Since the probability measures $\mu_{n}$ are absolutely continuous with respect to the Lebesgue measure, as an application of Sard's theorem, by using the argument in the proof of Proposition 4.1 of \cite{CM1}, for almost every choice of $(F^{1}_{n}, \ldots, F^{k}_{n}) \in \mathcal{P}^{k}_{n}$, the zero divisors $Z_{F^{k}_{1}}, \ldots, Z_{F^{k}_{n}}$ are smooth and intersect transversally, and so the wedge product of the currents $Z_{F^{1}_{n}}, \ldots, Z_{F^{k}_{n}}$, which is denoted by $[Z_{G^{k}_{n}}]$, is well-defined

 \begin{equation*}
  [Z_{G^{k}_{n}}]:= [Z_{F^{1}_{n}}] \wedge \ldots \wedge [Z_{F^{k}_{n}}]
  \end{equation*}

 By the method followed in [\cite{Gun}, arguments after Theorem 6.5] (see also [\cite{BG}, Theorem 4.2]) whose main idea is based on \mbox{[\cite{CLMM}, Proposition 3.5]} and utilizing the limit (\ref{szsz}) in the relevant parts, we conclude the following.

\begin{lem}\label{sonnn}
Given that the polynomial mappings $$G^{k}_{n}=(F^{1}_{n}, \ldots, F^{k}_{n}): \mathbb{C}^{m} \rightarrow \mathbb{C}^{k},$$ we have \begin{equation}\label{decex}
                                                              \mathbb{E}[Z_{G^{k}_{n}}]=\mathbb{E}[Z_{F^{1}_{n}}] \wedge \ldots \wedge \mathbb{E}[Z_{F^{k}_{n}}]
                                                            \end{equation} for $k=1, \ldots, m$.  Moreover, \begin{equation}\label{codimm}
                                                                                       \mathbb{E}[Z_{G^{k}_{n}}] \rightarrow (dd^{c} V_{K, q})^{k}
                                                                                     \end{equation}in the weak* topology of currents as $n \rightarrow \infty$.
\end{lem}

In order to achieve an equidistribution result for the random currents of integration $[Z_{G^{k}_{n}}]$, we use the techniques used in the proof of Theorem 3.1 of \cite{Shif} and Theorem 1.1 from \cite{BG} which gives the variance estimation of $[Z_{G^{k}_{n}}]$ associated with the zero sets of $k$-tuples of random holomorphic sections. It is proven by induction on the codimension $k$. The key concepts within the proof are the restriction of the current $[Z_{G^{k}_{n}}]=[Z_{F^{1}_{n}}] \wedge \ldots \wedge [Z_{F^{k}_{n}}]$ to the zero set $Z_{G^{k-1}_{n}}$ and the zero set $Z_{F^{k}_{n}}$, combined with applying the variance estimation for codimension $1$.

\begin{thm}\label{var2}

   If the probability space $(\mathcal{P}_{n}^{k}, \mu_{n}^{k})$ has the aforementioned conditions, then for any fixed $\varphi \in \mathcal{D}^{m-k, m-k}(\mathbb{C}^{m})$, we have the estimate $\mathrm{Var}{\langle [\widehat{Z_{G^{k}_{n}}}], \varphi \rangle}\leq D_{\varphi}^{2}\,C_{n}^{\frac{2}{\alpha}} \frac{1}{n^{2}}$, where $D_{\varphi}$ is a constant that is dependent on the test form $\varphi$.

\end{thm}

Since $d_{n}=\binom{n+m}{m}$, we can find a constant $C>0$ such that $d_{n} \leq C\,n^{m}$, so, by using (\ref{dnmom}) and the inequality  $((d_{n})^{\alpha/\theta})^{2/\alpha} \leq (C\,n^{m})^{\alpha/\theta})^{2/\alpha}= C^{2/ \theta} n^{2m/ \theta}$, we have $(B d^{\alpha/\theta}_{n})^{2/\alpha} \leq B^{2 / \alpha} C^{2/ \theta} n^{2m/\theta}$. Now, by writing $C_{n}:=B (C n^{m})^{\alpha/\theta}$, we see that \begin{equation}\label{sumi}\sum_{n=1}^{\infty}{\frac{C_{n}^{2/\alpha}}{n^{2}}} < \infty\end{equation} because $\theta >2m$. When revising the approach used in Theorem \ref{equidc1} to apply to the zero locus $Z_{G^{k}_{n}}$, by employing Lemma \ref{sonnn} and (\ref{sumi}), we deduce an equidistribution result for any codimension $k$ without any summability condition. This is a slight generalization of Theorem 1.2 in \cite{Bay16} regarding the Bernstein-Markov measures involved.

\begin{thm}
Let $K\subset \mathbb{C}^{m}$ be a locally regular compact set and $q$ is a weight function on $K$. According to the above setup with the sequence $\{\sigma_{n}\}$ of asymptotically weighted Bernstein-Markov probability measures and $L^2(e^{-2qn}\sigma_{n})$-orthogonal basis, we have, for $\mu^{k}_{\infty}$-almost every sequence $\{G^{k}_{n}\}$ \begin{equation}\label{equidc}
                                                      [\widehat{Z_{G^{k}_{n}}}] \rightarrow (dd^{c} V_{K, q})^{k}
                                                    \end{equation}in the weak* topology of currents as $n\rightarrow \infty$, where $k=1, 2, \ldots, m$.
\end{thm}

\textbf{Ethical Approval} Ethics approval was not required for this study.

\textbf{Availability of Data and Materials .} Data sharing was not applicable to this article as no datasets were generated or analyzed during the current study.

\textbf{Competing Interests.} There are no competing financial interests to influence the work in this paper.

\textbf{Funding.} The author of this paper was supported by the TÜBİTAK-2518 project, No: 119N642.

\textbf{Acknowledgments.} We thank the anonymous referee for his/her careful review, which have improved the presentation of this paper. The author thanks Afrim Bojnik for valuable discussions.


{}

 \end{document}